\tikzstyle{black dot}=[fill=black, draw=black, shape=circle, minimum size = 0.2cm, inner sep = 0pt]
\newtheorem{theorem}{Theorem}
\theoremstyle{definition}
\title{The Game of Cycles with Sources Allowed}
\author{VIGYAN SAHAI \and RAVI TRIPATHI}
\date{August 2023}
\begin{document}
\maketitle

\begin{abstract}
        In this paper we introduce a variant of Francis Su's ``Game of Cycles,'' that we call ``Cycles with Sources.'' The only change to the rules is permitting nodes to be sources, while sinks are still prohibited. Despite this minor change in the rules, we show that even on simple games, like line graphs, there is a great change in the outcome of optimal play, which we fully analyze using Sprague–Grundy Theory.
\end{abstract}

\tableofcontents

\section{Introduction}
    
   \indent Francis Su's "Game of Cycles"\cite{su2020mathematics} has the property that for many classes of simple graphs, the  ``parity conjecture''~\cite{alvarado2021game} is true: that is, for a graph with an even number of markable edges, the second player can always win, and for a graph with an odd number of markable edges, the first player can always win.
    \newline
    \indent In this work, we ask: how sensitive is this phenomenon to small changes in the rules of the game? In particular, in the game of cycles, any move that creates a source node or a sink node is illegal. We ask: what if only moves that create sink nodes are illegal, but moves that create source nodes are allowed? We call this variant of Cycles, ``Cycles with Sources''. Changing this removes the symmetry that was at play with the original cycles game, which could often be won by ``mirroring'' your opponent's moves. By removing this symmetry, we throw the parity conjecture into question. 
    \newline
    \indent  We analyze games on simple graphs like lines using the Sprague–Grundy Theorem, which states that any impartial computational game is equivalent to a game of Nim of size $n$. We deal with this $n$, called the Grundy number or Nimber. Player 1 loses if the Nimber is 0, and any other Nimber greater than 0 results in Player 1's victory. This is because any game with a Nimber greater than 0 can be reduced into a game with Nimber 0. The second player now becomes the ``first player'' and loses. The process of finding Nimbers is explained further in detail in section 2, however, we recommend being familiar with Sprague-Grundy Theory.
    \newline
    \indent Ultimately for line graphs, this simple rule change turned a game that had Nimber 1 for odd sizes and 0 for even sizes into a game whose Nimbers followed a repeated cycle of length 17, starting at games of size 19, and containing Nimbers as high as 8, as calculated by a computer program shown in section 5. In proving that the observed repetition continues forever, we established a more general principle for any game whose moves always divide it into sub-games in a certain way, where these sub-games are also similarly sub-dividable, showing that repeating sequences can be shown to repeat indefinitely, provided the repetition occurs for long enough.
    \newline
    \indent Cycles with Sources can also simplify the outcome of certain games. In section 4, we analyze a game consisting of a single cycle, which with the regular rule-set follows the parity conjecture. However, in Cycles with Sources, the rule change turned the game into a guaranteed victory for player no matter the size of $n$. However, unlike with sources disallowed, the winning strategy is not as simple, as Nimbers on the second move can reach as high as 9.
    \newline
    \indent In Section 2 we analyze the basic game of cycles on a line, which is already understood, but it informs our approach in the next section, in which we apply the new rule-set to a line and examine how the Nimbers of the same types of sub-games are affected. Theorem 2, proved in Section 3, then informs other simple cases with the sources-allowed rule-set, demonstrated in Section 4. Further questions about the game of cycles can be found in~\cite{lin2021exploring,fokkink2022some}.

\section{Simple Examples of the Game of Cycles}
We start with a case that has already been analyzed, specifically by Mathews~\cite{mathews2021game}, the standard game of cycles with its normal rule-set on a straight line of $n$ edges (this game was also analyzed by Lin~\cite{lin2021exploring} with respect to whether the first or second player wins, however, without Sprague-Grundy Theory), simply to gain an understanding of the way Nimbers relate to the Nimbers of sub-games of various ``types.''

\begin{theorem}
    \cite{mathews2021game} A line segment of length $n$ has Nimber 0 if $n$ is even, and Nimber 1 if $n$ is odd.
\end{theorem}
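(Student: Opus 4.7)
My plan is to prove this by induction on $n$, using the Sprague--Grundy decomposition of the game after the first move. The base cases $n = 1$ (Nimber $1$) and $n = 2$ (Nimber $0$) follow by direct inspection. For the inductive step, any first move is the orientation of some edge $e_i$ of the line, and by reflection symmetry the choice of direction does not affect the resulting Nimber. After this move, the two sides of $e_i$ become independent sub-games, but the edge immediately adjacent to $e_i$ on each side is now constrained: if it is ever marked, it must be oriented in a prescribed way, else the vertex at the split would become a source or sink. Let $H_k$ denote the Nimber of a line of $k$ edges with one such one-sided constraint on its endpoint edge, so that
\[
L_n \;=\; \operatorname{mex}\bigl\{H_{i-1} \oplus H_{n-i} : 1 \le i \le n\bigr\}.
\]

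The heart of the argument is the closed form $H_k = k$. Unpacking a first move inside $H_k$ spawns doubly-constrained sub-lines, whose two boundary constraints can either agree in direction (call this family $S_k$) or disagree ($D_k$), so one carries out a simultaneous induction on the four families $L_k, H_k, S_k, D_k$ until the pattern $H_k = k$ is established. Once $H_k = k$ is in hand, the outer formula becomes
\[
L_n \;=\; \operatorname{mex}\bigl\{(i-1) \oplus (n-i) : 1 \le i \le n\bigr\},
\]
and a short XOR-parity check finishes: the value $0$ appears in this set iff $i-1 = n-i$, which demands $n$ odd; the value $1$ appears iff $i-1$ and $n-i$ differ only in their last bit, which (since their sum is $n-1$) demands $n$ even. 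Hence the mex equals $0$ for even $n$ and $1$ for odd $n$.

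I expect the main obstacle to be managing the mutual recursion among the four families $L, H, S, D$ cleanly: each has its own case split inside the mex, and one must verify that the surprisingly neat identity $H_k = k$ really drops out of the interaction rather than degenerating into a messier formula. As a sanity check on the winner (though not on the exact Nimber), one can use a mirror strategy---Player 2 reflects across the center for even $n$, and Player 1 plays the middle edge then mirrors for odd $n$---but this only distinguishes Nimber zero from positive, so it cannot replace the Grundy computation that pins the value to $n \bmod 2$.
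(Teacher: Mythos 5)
Your overall strategy---induction on the unconstrained line together with the one-sided and two-sided constrained sub-line families---is essentially the paper's (it uses six game types $g_1,\dots,g_6$ corresponding to your $L$, $H$, $S$, $D$). But your key lemma $H_k = k$ is false, and the simultaneous induction you propose cannot establish it. The correct value is $H_k = k-1$ (the paper's $g_2(n)$ and $g_3(n)$ have Nimber $n-1$). Already $H_1 = 0$: the single unmarked edge of that sub-line has a degree-one leaf at its free end, and orienting the edge either way turns that leaf into a source or a sink, so the position has no legal moves at all. The same oversight appears in two other places: your base case $L_1 = 1$ is wrong ($L_1 = 0$, since neither orientation of the lone edge is legal), and your mex set for $L_n$ ranges over $1 \le i \le n$, but the moves $i=1$ and $i=n$ (marking an end edge of the line) are illegal for the same reason; the paper notes explicitly that the end edges can never be marked.

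The reason your final parity check still lands on the right answer is a cancellation of the two errors: with the correct values and the correct move set the relevant set is $\bigl\{(a-1)\oplus(b-1) : a+b=n-1,\ a,b\ge 1\bigr\} = \bigl\{a'\oplus b' : a'+b'=n-3,\ a',b'\ge 0\bigr\}$, whereas yours is $\bigl\{a\oplus b : a+b=n-1,\ a,b\ge 0\bigr\}$; the two sums differ by $2$, and only the parity of the sum governs whether the values $0$ and $1$ can occur, so the mex comes out the same. The conclusion survives, but the proof does not: the induction on $L,H,S,D$ breaks at $k=1$ when it tries to verify $H_1=1$, and at every later step as well (e.g.\ $H_2=1$, not $2$, since that position has exactly one legal move, leading to the Nimber-$0$ position $H_1$). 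Repairing the argument amounts to rerunning the recursion with $H_k=k-1$ and with the end moves excluded, which is precisely what the paper's case analysis does.
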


\begin{proof}
    We will prove the theorem by induction. We refer to a game of type $i$ with $n$ unmarked segments as $g_i(n)$ from now on. All games other than $g_1(n)$ are sub-games where the directed segment at the end is not restricted by the no source or sink rule. Here is our induction hypothesis for all $n>1$:

        \begin{enumerate}
            \item $\bullet$---$\bullet\dots\bullet$---$\bullet$    : $n$ unmarked segments: Nimber is $0$ for $n$ even and $1$ for $n$ odd.
        
            \item $\bullet\rightarrow\bullet$---$\bullet\dots\bullet$---$\bullet$  : $n$ unmarked segments: Nimber is $n-1$.

            \item $\bullet\leftarrow\bullet$---$\bullet\dots\bullet$---$\bullet$  : $n$ unmarked segments: Nimber is $n-1$.

            \item $\bullet\rightarrow\bullet$---$\bullet\dots\bullet$---$\bullet\rightarrow\bullet$  : $n$ unmarked segments: Nimber is $0$ for $n$ even and $1$ for $n$ odd.

            \item $\bullet\rightarrow\bullet$---$\bullet\dots\bullet$---$\bullet\leftarrow\bullet$: $n$ unmarked segments: Nimber is $1$ for $n$ even and $0$ for $n$ odd.

            \item $\bullet\leftarrow\bullet$---$\bullet\dots\bullet$---$\bullet\rightarrow\bullet$  : $n$ unmarked segments: Nimber is $1$ for $n$ even and $0$ for $n$ odd.
        \end{enumerate}

    Consider the base case where $n=1$. The game is instantly winning in game type 4, $g_4$, because there is one possible move where player 1 wins. In types 2, 3, 5, and 6, the only possible move creates a source or sink, resulting in a Nimber of 0.
        
    Assume the inductive hypothesis holds for all $1\leq k \leq n$. To calculate the Nimbers of games of length $n+1$ we can split the games into pairs of sub-games, $\{g_p(a), g_q(b)\}$ of whose Nimbers we already know due to the inductive hypothesis since $a,b < n+1$. We can then use the xor function ($\oplus$) to combine the two games and find the resulting Nimber. We can split the game along every single possible segment and find the mex of these Nimbers to find the true Nimber value of the larger game. 
    
    For the base cases of $n \in \{2,3\}$, the induction hypothesis can be verified by exhaustive case analysis, which we omit here and leave to the reader to verify.
    
    We now establish the induction for all the cases for length $n+1$, where $n>2$, assuming that the induction hypothesis holds for all lengths up to and including $n$.

    \begin{enumerate}
        \item $\bullet$---$\bullet$---$\bullet\dots\bullet$---$\bullet$---$\bullet$ : $n+1$ segments
        \begin{enumerate}
        \item $n+1$ is even

        Because of the no source/sink rule, the edges at the ends cannot be marked.
        Suppose, then, that the edge that is marked splits the segment into two parts of length $a$ and $b$, where $a \leq b$, and $a+b=n$, where $n$ is odd. No matter how the edge is marked, this reduces to two sub-games, $g_3$ of the form 
        $\bullet\leftarrow\bullet$---$\bullet\dots\bullet$---$\bullet$---$\bullet$ and $g_2$ of the form $\bullet\rightarrow\bullet$---$\bullet\dots\bullet$---$\bullet$---$\bullet$.
        
        First we must consider the special case where $a=1$. In this case, one segment has Nimber $0$, while the other has Nimber $b-1=(n-1)-1=n-2$. Because $n>2$, this Nimber will always be greater than $0$. And of course, $(n-2) \oplus 0 = n-2$, which is greater than $0$.

        Next we consider the general case where $b \ge a >1$. In this case, by the induction hypothesis, the Nimbers of the sub-games will be $b-1$ and $a-1$. Now, because $n=a+b$ is odd, we know that exactly one of $a-1$ and $b-1$ must be odd. Therefore, $(a-1) \oplus (b-1)$ cannot be $0$.

        Thus we have shown that none of the sub-games of this case can have Nimber 0. Therefore, the mex of the Nimbers of all the sub-games must be 0, and this establishes the induction hypothesis for this case.

        \item $n+1$ is odd.
        \newline
        Similarly to the previous case, we can split the line into 2 sub-games with lengths of $a$ and $b$. 
        
        First we can show that the Nimber 1 will never appear through the xor of any pair of sub-games. In order to achieve a Nimber of 1 through xor, the two numbers must be odd and even. However, since $n+1$ is odd, $n$ must be even, so $a+b$ must add to an even number. An odd number added to an even number is odd, not even, therefore the Nimber 1 will never appear through xor.

        Next we can show that the Nimber 0 must appear. Since $a+b$ will be even, the case when $a=b$ will occur when split at the middle segment. These two sub-games are $g_3$ and $g_2$ of type $\bullet$---$\bullet$---$\bullet\dots\bullet$---$\bullet\rightarrow\bullet$ and $\bullet\rightarrow\bullet$---$\bullet\dots\bullet$---$\bullet$---$\bullet$ respectively, which both have the same Nimber, the xor of which is 0.

        Thus we have shown that none of the sub-games of this case can have value 1, and the value of 0 is present. Therefore, the mex of the values of all the sub-games must be 1, and this establishes the induction hypothesis for this case.
        \end{enumerate}

        \item $\bullet\rightarrow\bullet$---$\bullet\dots\bullet$---$\bullet$---$\bullet$ : $n+1$ segments.
        \newline
        Similarly to the previous case, we can split the line into 2 sub-games with the length of the left sub-game being $a$ and the length of the right sub-game being $b$(we sometimes will call the sub-games by their lengths $a$ and $b$). However, the left sub-game will either be of the form $\bullet\rightarrow\bullet$---$\bullet\dots\bullet$---$\bullet\rightarrow\bullet$ or $\bullet\rightarrow\bullet$---$\bullet\dots\bullet$---$\bullet\leftarrow\bullet$, with a Nimber of 0 or 1 depending on the parity of $a$, this pattern can be seen in the initial hypothesis. 
        
        First we observe the two end cases, where $a=0$, and $b=1$. In the case of $a=0$ the $b$ sub-game turns into the $n$ case, with a Nimber of $n-1$. In the case of $b=1$, the arrow can point in either direction. Depending on the parity of $a$ we can choose the direction such that the $a$ sub-game has a Nimber of 0. The $b$ sub-game has a Nimber of 0, and $0\oplus0$ is 0.

        Next we can show that the $b=1$ base case can be extended to include all the Nimbers up to the base case of $a=0$. By moving the partition of the sub-games, increasing $b$ and decreasing $a$, we can increase the Nimber of the $b$ sub-game, because it is of the form $\bullet\rightarrow\bullet$---$\bullet\dots\bullet$---$\bullet$---$\bullet$ which has a Nimber of $b-1$. By alternating the direction of the arrow according to the parity of $a$ we can ensure the $a$ sub-game has a Nimber of 0. And the xor of any number with 0 is just the number itself. So we can achieve all the Nimbers from 0 to $n-2$ this way.

        Finally we can show that any Nimbers higher than $n-1$ are impossible. The highest Nimber that can be achieved through either the $a$ or $b$ sub-games is n-2, excluding the $a=0$ case. This Nimber could be xored with at most a Nimber of 1, therefore only at most increasing the Nimber by 1 equaling $n-1$. Any other combination of sub-games must include either a 0 or 1 and therefore must be smaller.

        Thus we have shown that the sub-games of this case can have value 0 to $n-1$, and nothing higher than $n-1$. Therefore, the mex of the values of all the sub-games must be $n$, and this establishes the induction hypothesis for this case.

        \item $\bullet\leftarrow\bullet$---$\bullet\dots\bullet$---$\bullet$---$\bullet$ : $n+1$ segments.
        \newline
        This case is exactly the same as the previous one, except the arrow is the other way and as such the left sub-games will be of the form $\bullet\leftarrow\bullet$---$\bullet\dots\bullet$---$\bullet\rightarrow\bullet$ or $\bullet\leftarrow\bullet$---$\bullet\dots\bullet$---$\bullet\leftarrow\bullet$. However, the Nimbers are still the same and at least one of them will equal 0 for all $n$, the parity is simply flipped.

        \item $\bullet\rightarrow\bullet$---$\bullet\dots\bullet$---$\bullet\rightarrow\bullet$ : $n+1$ segments 
        \begin{enumerate}
        \item $n+1$ is even.
        \newline
        Similarly to the previous case, we can split the line into 2 sub-games with lengths of $a$ and $b$. 
        
        First we can show that the only Nimbers that can appear are 1s and 0s. Due to the two arrows at the ends, any pair of sub-games will be of the form $\bullet\rightarrow\bullet$---$\bullet\dots\bullet$---$\bullet\rightarrow\bullet$, $\bullet\leftarrow\bullet$---$\bullet\dots\bullet$---$\bullet\rightarrow\bullet$, or $\bullet\rightarrow\bullet$---$\bullet\dots\bullet$---$\bullet\leftarrow\bullet$, all of which can only have Nimbers of 0s and 1s.

        Next we can show that the only Nimber present is 1. Since $n+1$ is even, $n$ must be odd, as such the two sub-games, $a$ and $b$, must be of odd and even length, or vice versa. The two possibilities for types of sub-games are the odd version of $\bullet\rightarrow\bullet$---$\bullet\dots\bullet$---$\bullet\rightarrow\bullet$ with its even form, and the odd version of $\bullet\rightarrow\bullet$---$\bullet\dots\bullet$---$\bullet\leftarrow\bullet$ with the even form of $\bullet\leftarrow\bullet$---$\bullet\dots\bullet$---$\bullet\rightarrow\bullet$ and vice versa. This leads to the xor of 1 and 0 every single time, which is 1.

        Thus we have shown that none of the sub-games of this case can have value 0. Therefore, the mex of the values of all the sub-games must be 0, and this establishes the induction hypothesis for this case.

        \item $n+1$ is odd.
        \newline
        This is the almost the same case as before just with the parity flipped. This leads to the xor of 0 and 0, and 1 with 1, leading to only 0 being a Nimber. Therefore the mex of the values of all the sub-games must be 1.
        \end{enumerate}
        \item $\bullet\rightarrow\bullet$---$\bullet\dots\bullet$---$\bullet\leftarrow\bullet$ and $\bullet\leftarrow\bullet$---$\bullet\dots\bullet$---$\bullet\rightarrow\bullet$: $n+1$ segments.
        \newline
        We two types 5 and 6 simultaneously since the have the same Nimbers. Type 5 games turn into types 4 and 5, and type 6 games turn into types 4 and 6. They are shown to have Nimber 0 in a similar manner to case 4, with parity flipped.
    \end{enumerate}

\end{proof}

This theorem leads nicely into the corresponding case for a graph consisting of a single cycle. However, we omit this because, after turning our attention to the case of the Cycles game with no sources allowed, we found a more general result that can be used to cover all of these cases.

\section{Cycles with Sources on a Line}

\begin{theorem}
    Let $\{g_1(n), g_2(n), \dots, g_k(n)\}$ be a set of impartial games for all natural numbers $n$. For every $i\in [k]$, there exists a subset $S_i$ of $[k]\times[k]$. Suppose for all $n$, every playable move, denoted $(p,q,a,b)$ on $g_i(n)$ splits the game into $g_p(a)$ and $g_q(b)$, where $(p,q)\in S_i$, $a$ and $b$ are non-negative integers, and $a+b+1=n$. Let $s\geq T$ be positive integers. Further, suppose:
    \begin{enumerate}
        \item For all $a,b\geq s-T$ and $(p,q)\in S_i$, the move $(p,q,a,b)$ is valid for $n=a+b+1$.
        \item If $a< s-T$, and $n-a-1\geq s-T$, then if $(p,q,a,n-a-1)$ is invalid for some $g_i(n)$, then $(p,q,a,m-a-1)$ is invalid for all natural numbers $m$,
        \item Similarly, if $b<s-T$, and $n-b-1\geq s-T$, then if $(p,g,n-b-1,b)$ is invalid for some $g_i(n)$, then $(p,g,m-b-1,b)$ is invalid for all natural numbers $m$.
    \end{enumerate}  
    
    Finally, suppose also that for all $n \in [s+1,2s+1]$, we have that $\forall i\in [k]$, the Nimber of $g_i(n)$ is equal to the Nimber of $g_i(n-T)$, which we can write as $Nim(g_i(n)) = Nim(g_i(n-T))$. 
    
    Then, $\forall n \geq s+1$, $Nim(g_i(n))=Nim(g_i(n-T))$ for all $i\in [k]$.

\end{theorem}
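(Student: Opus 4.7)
The plan is strong induction on $n$, with base case $n \in [s+1, 2s+1]$ supplied by hypothesis, and inductive step for $n \geq 2s+2$. For the inductive step, fix such an $n$ and assume $\mathrm{Nim}(g_j(m)) = \mathrm{Nim}(g_j(m-T))$ for all $j \in [k]$ and all $m$ with $s+1 \leq m < n$. Since the Nimber of a position is the mex of the XORs of Nimbers of the sub-game pairs produced by its valid moves, it suffices to prove, for each $i \in [k]$, that the two sets
\[
A_n(i) := \{\mathrm{Nim}(g_p(a)) \oplus \mathrm{Nim}(g_q(b)) : (p,q,a,b) \text{ is a valid move on } g_i(n)\}
\]
and $A_{n-T}(i)$ coincide. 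I will establish both inclusions by shifting the ``large'' coordinate of a move by $T$.

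For the inclusion $A_n(i) \subseteq A_{n-T}(i)$, take a valid move $(p,q,a,b)$ on $g_i(n)$. Since $a+b = n-1 \geq 2s+1$, we have $\max(a,b) \geq s+1$; assume WLOG $a \geq s+1$. I match this move with $(p,q,a-T,b)$ at level $n-T$. Validity is argued in two cases: if $b \geq s-T$, then $a-T \geq s+1-T \geq s-T$, both coordinates lie in the ``large'' regime, and hypothesis (1) guarantees validity; if $b < s-T$, then the other coordinate equals $a \geq s-T+1$ at level $n$ and $a-T \geq s-T+1$ at level $n-T$, so hypothesis (3) applied with $b$ fixed transfers validity at one level to the other. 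For the Nimber match, the inductive hypothesis at $m = a \in [s+1, n-1]$ gives $\mathrm{Nim}(g_p(a)) = \mathrm{Nim}(g_p(a-T))$, while $\mathrm{Nim}(g_q(b))$ is unchanged. The reverse inclusion $A_{n-T}(i) \subseteq A_n(i)$ is symmetric: given a valid move $(p,q,a',b')$ on $g_i(n-T)$, the choice $n \geq 2s+2$ forces $\max(a',b') \geq s-T+1$ (if both were $\leq s-T$, then $a'+b' \leq 2(s-T) < n-T-1 = a'+b'$, a contradiction using $T \geq 1$), so WLOG $a' \geq s-T+1$, and I map to $(p,q,a'+T,b')$, transferring validity via hypotheses (1) and (3) and matching Nimbers via the inductive hypothesis at $m = a'+T \in [s+1, n-1]$.

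Conceptually the proof is a ``shift one large coordinate'' matching between the two move-sets, so the principal obstacle I anticipate is bookkeeping rather than a deep idea. Each inclusion requires a careful case split based on whether the unshifted coordinate lies in the ``small'' regime ($< s-T$) or the ``large'' regime ($\geq s-T$), invoking the appropriate validity hypothesis (1) or (3) accordingly, and then verifying that the shifted coordinate lies in $[s+1, n-1]$ so that the inductive hypothesis delivers the needed Nimber equality. The threshold $n \geq 2s+2$ is tight for exactly this purpose: it is the smallest bound that, via a simple pigeonhole on $a+b = n-1 \geq 2s+1$, guarantees that at least one coordinate of every move can be shifted by $T$ without leaving the regime covered by hypotheses (1) and (3).
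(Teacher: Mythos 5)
Your proof is correct and is essentially the same argument as the paper's: induction on $n$ with the given base case on $[s+1,2s+1]$, shifting one coordinate of each move by $T$, using conditions 1--3 to transfer validity and the inductive hypothesis to preserve the XOR, and concluding equality of the mex. Your packaging as a direct two-way inclusion $A_n(i)=A_{n-T}(i)$ via a single shift of the large coordinate is a slightly cleaner organization than the paper's decomposition into ``outer $2s$'' and ``middle'' moves (which reduces middle moves to outer ones by repeated shifts), but the underlying mechanism is identical.
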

\begin{proof}
    We prove by induction. The base case, for $s+1\leq n \leq 2s+1$, is given.
    \newline\newline
    We suppose $n>2s+1$ and prove the inductive hypothesis for any $i=1,\dots,k$, that $Nim(g_i(n))=Nim(g_i(n-T))$. Suppose that the hypothesis holds for all $g_i(m)$ where $s+1\leq m < n$.
    \newline\newline
    Consider any valid move $(p,q,a,b)$ on a fixed $g_i(n)$, where $(p,q) \in S_i$ and $a+b+1=n$. We will consider pairs of Nimbers formed where at least one of $a$ and $b$ is less than or equal to $s$, which we will call the ``outer $2s$'' cases. 
    We first suppose $a\leq s$. To avoid violating condition 2, moves of the form $(p,q,a,b_0)$ must be valid for all $b_0\geq s-T$. In particular, $b-T\geq s-T$, so the game $g_i(n-T)$ can be split into $g_p(a)$ and $g_q(b-T)$. Since $n > b \geq s+1$, the inductive hypothesis applies to $b$, meaning $Nim(g_q(b)) = Nim(g_q(b-T))$. We therefore have $Nim(g_p(a)) \oplus Nim(g_q(b)) = Nim(g_p(a)) \oplus Nim(g_q(b-T))$. This same argument applies symmetrically to the case where $b \leq s$, evoking condition 3. This means that the Nimbers of   the ``outer $2s$'' pairs in $g_i(n)$ and their xors are the same as those of the ``outer $2s$'' of $g_i(n-T)$. Note that the ``outer $2s$'' of $g_i(n-T)$ may not be $2s$ distinct moves and may overlap if $n-T \leq 2s$.
    \newline\newline
    Now we show that the Nimbers of the ``middle'' pairs, the pairs for which $a,b>s$ (which are always playable for any $(p,q)\in S_i$ by condition 1), are the same as the pairs of Nimbers of the ``outer $2s$.'' If $b>s$, we can repeatedly invoke the inductive hypothesis until we reach a move $(p,q,a-jT,b+jT)$, which is valid by condition 1. $Nim(g_p(a-jT))\oplus Nim(g_q(b+jT)) = Nim(g_p(a))\oplus Nim(g_q(b))$. Therefore, the ``middle'' pairs have duplicate Nimbers of the valid ``outer $2s$'' pairs.
    \newline\newline
    We have proved that every pair of Nimbers formed by playing a valid move in $g_i(n)$ is one of the pairs of Nimbers formed from an ``outer $2s$'' move of $g_i(n)$, which  in turn is a pair of Nimbers formed from an ``outer $2s$'' move of $g_i(n-T)$. In addition, the Nimbers of every ``middle'' pair in $g_i(n-T)$ are also found in the pairs of the ``outer $2s$'' of $g_i(n-T)$. Therefore, every pair of Nimbers in either game is found in the Nimbers of the shared ``outer $2s$'' pairs, and, trivially, every Nimber pair in the shared ``outer $2s$'' is in both games. Therefore, $g_i(n)$ and $g_i(n-T)$ contain all of the same pairs of Nimbers and their xors, and so, taking the mex, $Nim(g_i(n)) = Nim(g_i(n-T))$.
\end{proof}
The motivation for this Theorem was a pattern we noticed in the Nimbers of the Game of Cycles on a line of length $n$ with sources allowed. In particular, the ``outer $2s$'' above was in fact an ``outer 86'' in the various game types that emerge when examining this game.
 \begin{theorem}
    For Cycles with sources the Nimbers of line segments of successive length are given by the following sequence:
    The first 18 are:
    \begin{equation*}
        0,1,0,1,0,3,2,0,2,3,0,1,0,1,0,5,7,0
    \end{equation*}
    From then on, the Nimbers form the following repeated sequence
    \begin{equation*}
        1,0,1,0,3,2,4,5,3,0,1,0,1,0,5,7,8
    \end{equation*}
    \end{theorem}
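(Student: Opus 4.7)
The plan is to invoke Theorem 2 with $T=17$ and $s=43$, so that the ``outer $2s$'' matches the ``outer 86'' alluded to above. This requires three things: (i) set up the family of game types $g_i$ for Cycles with Sources on a line and record the allowed sub-game splits $S_i$; (ii) verify that the move-validity conditions 1--3 of Theorem 2 hold for this family; and (iii) verify computationally that the claimed period-17 repetition already holds on the initial window $[s+1,2s+1]=[44,87]$ for every game type. Theorem 2 will then push the repetition out to all $n\geq 44$, and the first 18 Nimbers are simply tabulated as a base case.

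First I would enumerate the directed line-segment types that arise under the source-allowed rules. As in Theorem 1, each type is determined by the configurations of its two endpoints: each end is either unmarked, marked pointing inward, or marked pointing outward, giving up to $9$ types (some potentially merged by left/right reflection). For each type $i$, the set $S_i\subseteq [k]\times[k]$ records, for every interior edge together with a choice of direction for its marking, the pair of end-configuration types of the two resulting sub-segments. The lengths $a,b$ of those sub-segments satisfy $a+b+1=n$, and $S_i$ itself depends only on the type, not on $n$.

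Next I would verify conditions 1--3 of Theorem 2. The crucial observation is that the legality of a move under Cycles with Sources depends solely on the no-sink rule, which is a \emph{local} condition at the two endpoints of the freshly marked edge. Hence whether $(p,q,a,b)$ is a valid move on $g_i$ depends only on $i,p,q$ together with whether $a$ or $b$ equals $0$: a sub-segment of length $0$ collapses the new edge against the already-fixed end of $g_i$, which is the only way the existing orientation at an end of $g_i$ can interact with the new arrow. As soon as $a,b\geq 1$, legality is completely determined by $i,p,q$ alone. Consequently condition 1 holds for \emph{every} $s\geq T$, and conditions 2 and 3 are automatic because the only ``small'' case that can be invalid for some $n$ is $a=0$ (respectively $b=0$), and its validity is independent of $n$ entirely.

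All that remains is the initial window. I would run the program of Section 5 to compute $Nim(g_i(n))$ for every type $i$ and every $n\leq 87$, and check directly that $Nim(g_i(n))=Nim(g_i(n-17))$ holds for all $44\leq n\leq 87$ and every type $i$. Granted this, Theorem 2 yields the same equality for all $n\geq 44$, which for the main line type forces the 17-periodic tail displayed in the theorem statement, while the first 18 Nimbers are read directly off the computation. The main obstacle is bookkeeping rather than insight: one must correctly list every type, specify $S_i$ for each, and confirm conditions 2 and 3 by hand for each type, because a single overlooked ``$a=0$ depends on $n$'' corner case would break the hypothesis and cause the inductive argument of Theorem 2 to fail.
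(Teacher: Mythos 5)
Your proposal is correct and follows essentially the same route as the paper: take $T=17$, $s=43$, enumerate the end-configuration types with their split sets $S_i$, observe that the only moves whose legality can fail are those with $a=0$ or $b=0$ and that this is independent of $n$ (so conditions 1--3 of Theorem 2 hold), verify the repetition computationally on $[44,87]$, and let Theorem 2 extend it to all $n\geq 44$. The only cosmetic difference is that the paper's written proof collapses the types to six by reflection symmetry while its code (and your sketch) works with nine.
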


    \begin{proof}
        Let $\{g_1(n),\dots,g_6(n)\}$ be the following types of games, where $n$ is the number of unmarked edges.
        \begin{enumerate}
            \item $\bullet$---$\bullet\dots\bullet$---$\bullet$
    
        \item $\bullet\rightarrow\bullet$---$\bullet\dots\bullet$---$\bullet$

        \item $\bullet\leftarrow\bullet$---$\bullet\dots\bullet$---$\bullet$

        \item $\bullet\rightarrow\bullet$---$\bullet\dots\bullet$---$\bullet\rightarrow\bullet$

        \item $\bullet\rightarrow\bullet$---$\bullet\dots\bullet$---$\bullet\leftarrow\bullet$

        \item $\bullet\leftarrow\bullet$---$\bullet\dots\bullet$---$\bullet\rightarrow\bullet$
        \end{enumerate}
        For each game type $g_i$, moves are always of the form $(p,q,a,b)$, where $(p,q)=(2,3)$ or $(3,2)$ for $i=1$, $(p,q)=(5,3)$ or $(4,2)$ for $i=2$, $(p,q)=(4,3)$ or $(6,2)$ for $i=3$, $(p,q)=(4,4)$ or $(5,6)$ for $i=4$, $(p,q)=(4,5)$ or $(5,4)$ for $i=5$, and $(p,q)=(4,6)$ or $(6,4)$ for $i=6$.
        The only unplayable moves are moves where $a$ or $b$ is zero, so condition 1 of Theorem 2 is satisfied. 
        The unplayable moves are:
        \begin{itemize}
            \item Placing a left arrow on the left edge of type 2, denoted $(5,3,0,n-1)$ for $g_2(n)$, $\forall n$
            \item Placing a left arrow on the left edge of type 4, denoted $(5,6,0,n-1)$ for $g_4(n)$, $\forall n$
            \item Placing a left arrow on the left edge of type 5, denoted $(5,4,0,n-1)$ for $g_5(n)$, $\forall n$
            \item Placing a right arrow on the right edge of type 5, denoted $(4,5,n-1,0)$ for $g_5(n)$, $\forall n$
        \end{itemize}
        Because any unplayable move is of this form, conditions 2 and 3 hold. Lastly, a computer program can verify the first 87 Nimbers of all 6 game types:
         \begin{enumerate}
        \item $\bullet$---$\bullet\dots\bullet$---$\bullet$: \newline$0,1,0,1,0,3,2,0,2,3,0,1,0,1,0,5,7,0,1,0,1,0,3,2,4,5,3,0,1,0,1,0,5,7,8,1,0,1,0,3,2,4,5,3,0,1,0,1,0,$\newline$5,7,8,1,0,1,0,3,2,4,5,3,0,1,0,1,0,5,7,8,1,0,1,0,3,2,4,5,3,0,1,0,1,0,5,7,8,1$
    
        \item $\bullet\rightarrow\bullet$---$\bullet\dots\bullet$---$\bullet$  :  \newline$0,1,0,1,0,3,2,0,2,3,0,1,0,1,0,5,7,0,1,0,1,0,3,2,4,5,3,0,1,0,1,0,5,7,8,1,0,1,0,3,2,4,5,3,0,1,0,1,0,$\newline$5,7,8,1,0,1,0,3,2,4,5,3,0,1,0,1,0,5,7,8,1,0,1,0,3,2,4,5,3,0,1,0,1,0,5,7,8,1$

        \item $\bullet\leftarrow\bullet$---$\bullet\dots\bullet$---$\bullet$  :\newline$1,2,3,1,3,2,4,5,2,3,1,3,2,1,4,2,4,1,2,3,1,3,2,4,5,2,3,1,3,2,1,4,3,4,1,2,3,1,3,2,4,5,9,3,1,3,2,1,4,$\newline$3,4,1,2,3,1,3,2,4,5,9,3,1,3,2,1,4,3,4,1,2,3,1,3,2,4,5,9,3,1,3,2,1,4,3,4,1,2$

        \item $\bullet\rightarrow\bullet$---$\bullet\dots\bullet$---$\bullet\rightarrow\bullet$  :\newline$1,2,3,1,3,2,4,5,2,3,1,3,2,1,4,2,4,1,2,3,1,3,2,4,5,2,3,1,3,2,1,4,3,4,1,2,3,1,3,2,4,5,9,3,1,3,2,1,4$\newline$3,4,1,2,3,1,3,2,4,5,9,3,1,3,2,1,4,3,4,1,2,3,1,3,2,4,5,9,3,1,3,2,1,4,3,4,1,2$

        \item $\bullet\rightarrow\bullet$---$\bullet\dots\bullet$---$\bullet\leftarrow\bullet$:\newline$0,1,0,1,0,3,2,0,2,3,0,1,0,1,0,5,7,0,1,0,1,0,3,2,4,5,3,0,1,0,1,0,5,7,8,1,0,1,0,3,2,4,5,3,0,1,0,1,0,$\newline$5,7,8,1,0,1,0,3,2,4,5,3,0,1,0,1,0,5,7,8,1,0,1,0,3,2,4,5,3,0,1,0,1,0,5,7,8,1$

        \item $\bullet\leftarrow\bullet$---$\bullet\dots\bullet$---$\bullet\rightarrow\bullet$  : \newline$1,0,3,2,0,2,3,0,1,0,1,0,5,7,0,1,0,1,0,3,2,4,5,3,0,1,0,1,0,5,7,8,1,0,1,0,3,2,4,5,3,0,1,0,1,0,$\newline$5,7,8,1,0,1,0,3,2,4,5,3,0,1,0,1,0,5,7,8,1,0,1,0,3,2,4,5,3,0,1,0,1,0,5,7,8,1,0$
    \end{enumerate}
    Evidently, $\forall n \in [44,87]$, for each $i \in [6]$, $Nim(g_i(n))=Nim(g_i(n-17))$. Therefore, by the theorem, for each $i \in [6]$, $Nim(g_i(n))=Nim(g_i(n-17))$ $\forall n \geq 88$. Therefore, the proposed repetitive sequence holds.
    \end{proof}

    \section{Graphs Consisting of a Single Cycle}

    \ctikzfig{fig1}
    
    \begin{figure}[h]
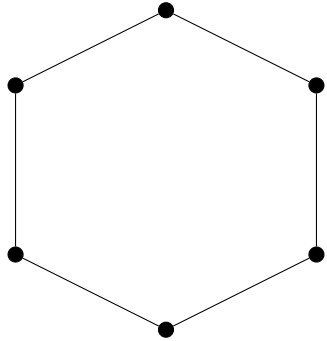

    \caption{A simple cycle of length 6}
    \label{fig1}
    \end{figure}
    \begin{theorem}
        With the standard ruleset, a graph consisting of a single cycle will have Nimber 0 if the amount of edges, $n$, is even and will have Nimber 1 if $n$ is odd for all $n>1$.
    \end{theorem}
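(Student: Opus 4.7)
The plan is to use the cycle's dihedral symmetry to reduce the analysis to a single first-move position, and then to identify that position with a line sub-game from Theorem~1.

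First I will observe that the automorphism group of $C_n$ acts transitively on directed edges and the Game of Cycles ruleset is preserved by graph automorphism; hence all $2n$ first moves lead to isomorphic positions with a common Nimber $N$, and the cycle's Nimber is simply the mex of the singleton $\{N\}$, which equals $1$ if $N=0$ and $0$ otherwise.

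Next I will fix a canonical first move that marks some edge $v_1 v_n$ as $v_1 \to v_n$. The resulting position is a path $v_1 v_2 \ldots v_n$ with $n-1$ unmarked edges, in which $v_1$ and $v_n$ are still degree-$2$ vertices each carrying one marked incident edge. At $v_1$ the marked edge is outgoing, so the no-source rule permanently forces the unmarked edge $v_1 v_2$ to be directed toward $v_1$; at $v_n$ the marked edge is incoming, so the no-sink rule forces $v_{n-1} v_n$ to be directed away from $v_n$. I will then argue that these are exactly the end-edge constraints imposed by the two flanking marked arrows in $g_4(n-1)$ from Theorem~1, after reversing the diagram left-to-right, so that the two positions are isomorphic as impartial games: they share the same unmarked edges, the same incidences, and the same forced boundary directions, and the no-source/sink rule propagates identically through the interior as play continues.

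Combining these ingredients, Theorem~1 gives $Nim(g_4(n-1))=0$ when $n-1$ is even (that is, $n$ odd) and $1$ when $n-1$ is odd (that is, $n$ even), so $N=0$ for odd $n$ and $N=1$ for even $n$. Taking mex yields cycle Nimber $1$ for $n$ odd and $0$ for $n$ even, as claimed. The main obstacle I anticipate is the identification step: one must verify carefully that the single marked cycle edge $v_1 v_n$ induces on the opened path exactly the same set of legal continuations as the two separate flanking arrows of $g_4(n-1)$, both initially and after every subsequent move, so that no later play can create a source or sink at $v_1$ or $v_n$ in a way with no counterpart in the line game. Once this game-isomorphism is granted, the theorem follows from a single invocation of Theorem~1 and a one-element mex.
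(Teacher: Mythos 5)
Your proposal is correct, but it takes a genuinely different route from the paper. You reduce the cycle, after a canonical first move, to the type-4 line game $\bullet\rightarrow\bullet$---$\cdots$---$\bullet\rightarrow\bullet$ of length $n-1$ (with unrestricted outer nodes), invoke the Nimber formula for that type established inside the proof of Theorem~1, and finish with a one-element mex justified by the dihedral symmetry of $C_n$; your identification of the opened cycle with $g_4(n-1)$ (up to a left-right reflection, depending on the orientation of the first mark) is the same modeling step the paper uses, and your parity bookkeeping is right. The paper deliberately does \emph{not} go this way: it instead sets up three opened-cycle game types, checks the small cases by computer, and applies the general periodicity result (Theorem~2) with $s=3$, $T=2$ to propagate the period-2 pattern, precisely to showcase that Theorem~2 subsumes this case. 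Your argument is shorter and avoids both the computer verification and Theorem~2 entirely, at the cost of relying on a fact ($\mathrm{Nim}(g_4(m))=0$ for $m$ even, $1$ for $m$ odd) that appears only in the induction hypothesis within Theorem~1's proof, not in its stated conclusion --- so if you write this up you should cite that auxiliary claim explicitly rather than Theorem~1 as stated. The paper's route, by contrast, buys a template that transfers directly to the sources-allowed cycle in Theorem~5, where no clean closed-form line result is available.
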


    \begin{proof}
        A cycle game, upon marking an edge, can be viewed as a line, with the first marked edge being both the starting and ending edge. In order to accurately represent a cycle as a line the nodes at the start and end will not be constricted by the no source or sink rule. In this manner the game can be viewed to consist of 3 game types denoted as the set $\{g_1(n),g_2(n),g_3(n)\}$ where $n$ is the number of unmarked edges.

        \begin{enumerate}
            \item $\bullet\rightarrow\bullet$---$\bullet\dots\bullet$---$\bullet\rightarrow\bullet$
            \item $\bullet\rightarrow\bullet$---$\bullet\dots\bullet$---$\bullet\leftarrow\bullet$
            \item $\bullet\leftarrow\bullet$---$\bullet\dots\bullet$---$\bullet\rightarrow\bullet$
        \end{enumerate}

        For each game type, $g_i(n)$, a move must be of the form $(p,q,a,b)$, where if $i=1$ then $(p,q)=(1,1)$ or $(p,q)=(2,3)$, if $i=2$ then $(p,q)=(1,2)$ or $(p,q)=(2,1)$, if $i=3$ then $(p,q)=(1,3)$ or $(p,q)=(3,1)$. Unplayable moves only exist if $a$ or $b$ are 0, and the original cycle game of length $n=1$ does not exist, satisfying condition 1 of Theorem 2. A list of the unplayable moves is given below:

        \begin{itemize}
            \item On $g_1(n)$ marking the leftmost edge to face the left node, denoted $(2,3,0,n-1)$ for $n>1$.
            \item On $g_1(n)$ marking the rightmost edge to face the left node, denoted $(2,3,n-1,0)$ for $n>1$.
            \item On $g_2(n)$ marking the leftmost edge to face the left node, denoted $(2,1,0,n-1)$ for $n>1$.
            \item On $g_2(n)$ marking the rightmost edge to face the right node, denoted $(3,1,n-1,0)$ for $n>1$.
            \item On $g_3(n)$ marking the leftmost edge to face the right node, denoted $(3,1,0,n-1)$ for $n>1$.
            \item On $g_3(n)$ marking the rightmost edge to face the left node, denoted $(1,3,n-1,0)$ for $n>1$.
        \end{itemize}

        These forms of unplayable moves exist for all $n\geq1$, satisfying conditions 2 and 3. The initial case exists when $s=3$ and the period of the pattern is two, $T=2$, following the parity of $n$. The Nimbers of $n=1\dots7$ can be verified by computer.

        \begin{enumerate}
            \item $\bullet\rightarrow\bullet$---$\bullet\dots\bullet$---$\bullet\rightarrow\bullet$  : $n=1\dots7$ unmarked segments: Nimber is $1,0,1,0,1,0,1$ respectively.

            \item $\bullet\rightarrow\bullet$---$\bullet\dots\bullet$---$\bullet\leftarrow\bullet$: $n=1\dots7$ unmarked segments: Nimber is $0,1,0,1,0,1,0$ respectively.

            \item $\bullet\leftarrow\bullet$---$\bullet\dots\bullet$---$\bullet\rightarrow\bullet$  : $n=1\dots7$ unmarked segments: Nimber is $0,1,0,1,0,1,0$ respectively.
        \end{enumerate}

        Clearly for all $g_i(n)$, where $i=1,2,3$ and $n\in[4,7]$, $Nim(g_i(n))=Nim(g_i(n-2))$. This allows the game to satisfy all requirements of Theorem 2 allowing the pattern of its sub-games to continue on for all $n>7$. However, it should be noted that the first player always turns the game into $g_1(n-1)$, switching the Nimbers around for the Nimber of the fully unmarked game from that of $g_1(n-1)$. For example, if $n=5$, the first player turns the unmarked game into $g_1(4)$, with Nimber 0, which suggests a second player victory, but now, the first player \textit{is} the second player in this state, showing that the first player is in the winning position, therefore the original unmarked game must have a Nimber of 1.

        \begin{itemize}
            \item For $n$ even:
            \begin{enumerate}
                \item $\bigcirc$: $n$ unmarked segments: Nimber is 0
                
                \item $\bullet\rightarrow\bullet$---$\bullet\dots\bullet$---$\bullet\rightarrow\bullet$  : $n$ unmarked segments: Nimber is $0$.
    
                \item $\bullet\rightarrow\bullet$---$\bullet\dots\bullet$---$\bullet\leftarrow\bullet$: $n$ unmarked segments: Nimber is $1$.
    
                \item $\bullet\leftarrow\bullet$---$\bullet\dots\bullet$---$\bullet\rightarrow\bullet$  : $n$ unmarked segments: Nimber is $1$.
            \end{enumerate}
            \item For $n$ odd:
            \begin{enumerate}
                \item $\bigcirc$: $n$ unmarked segments: Nimber is 1
                
                \item $\bullet\rightarrow\bullet$---$\bullet\dots\bullet$---$\bullet\rightarrow\bullet$  : $n$ unmarked segments: Nimber is $1$.
    
                \item $\bullet\rightarrow\bullet$---$\bullet\dots\bullet$---$\bullet\leftarrow\bullet$: $n$ unmarked segments: Nimber is $0$.
    
                \item $\bullet\leftarrow\bullet$---$\bullet\dots\bullet$---$\bullet\rightarrow\bullet$  : $n$ unmarked segments: Nimber is $0$.
            \end{enumerate}
        \end{itemize}
    \end{proof}
    Amazingly, despite how complicated the game of cycles becomes when dealing with allowing sources on a line, allowing sources on a cycle produces a simple result.
     \begin{theorem}
        For Cycles with Sources every simple cycle has Nimber 0.
    \end{theorem}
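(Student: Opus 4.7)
The plan is to reduce the cycle analysis to the line analysis already carried out in Theorem~3. First I would invoke the rotational and reflectional symmetry of $C_n$ to observe that every legal first move yields the same game position up to isomorphism, namely the line-from-cycle $g_1(n-1)$ introduced in Theorem~4: a path of $n-1$ unmarked edges whose two endpoints are joined by a single directed marked edge (the ``wrap-around''). Consequently $Nim(C_n) = \text{mex}\{Nim(g_1(n-1))\}$, which equals $0$ precisely when $Nim(g_1(n-1)) > 0$.

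The crux of the argument is the claim that, under the sources-allowed ruleset, $Nim(g_1(m)) = Nim(g_4(m))$ for all $m \geq 1$, where $g_4$ is the Theorem~3 line $\bullet\rightarrow\bullet$---$\bullet\dots\bullet$---$\bullet\rightarrow\bullet$. In both games the $m$ unmarked edges form a path whose two inner endpoints carry identical orientation data --- incoming marked at the left, outgoing marked at the right --- and all other vertices are initially unconstrained. A direct check shows the only invalid move in either game is marking the leftmost unmarked edge in the direction that would turn the left inner vertex into a sink, and every other valid move splits the game into the same pair of Theorem~3 subgames: $g_4(k-1)\otimes g_4(m-k)$ for a rightward mark at position $k$, or $g_5(k-1)\otimes g_6(m-k)$ for a leftward mark. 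The only formal discrepancy is that $g_4$ carries two extra degree-$1$ ``outer'' vertices frozen at setup, whereas $g_1$'s outer bullets are phantoms of the wrap-around edge; since neither type contributes a future move, the two game trees coincide.

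Given the equivalence, I would invoke the explicit Nimber table for $g_4$ from the proof of Theorem~3: the sequence $1,2,3,1,3,2,4,5,2,3,1,3,2,1,4,\dots$ eventually enters a period-$17$ block, and \emph{every} term of it is strictly positive. Hence $Nim(g_1(m)) > 0$ for every $m \geq 1$, and therefore $Nim(C_n) = \text{mex}\{\text{positive value}\} = 0$, as claimed.

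The main obstacle is the equivalence $g_1 \equiv g_4$ asserted in the second paragraph. Most of the work is a routine case check that the valid-move sets and subgame decompositions match exactly, but the subtle point is verifying that the wrap-around edge in $g_1$ introduces no extra sink constraints beyond the one already present in $g_4$. In particular, the rightmost inner vertex $w_m$ of $g_1$ carries the wrap-around's outgoing arrow and therefore can never become a sink no matter how the rightmost unmarked edge is eventually played --- the same property the outgoing marked edge affords to the right inner vertex of $g_4$. Once this sink-check is dispatched, the conclusion follows from Theorem~3 with essentially no additional computation.
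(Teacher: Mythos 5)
Your proposal is correct and follows essentially the same route as the paper: identify the position after any first move (unique up to symmetry) with the sources-allowed line game $g_4(n-1)$ from Theorem~3, note that its Nimber sequence is everywhere positive by the computed table together with the period-$17$ repetition, and conclude that the cycle's Grundy value is the mex of a singleton positive value, namely $0$. The only difference is that you spell out the equivalence between the cut-open cycle and $g_4$ (including the sink-check at the wrap-around vertices), a step the paper simply asserts.
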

    \begin{proof}
        Just like the previous game, this can also be viewed with sub-games of lines. When marking an edge for the first time, the game is reduced to a game type 4, $g_4(n)$, in the Theorem 2 proof. This game type, as proved in the first theorem, does not at any point have a Nimber of 0. Since getting to this position takes a move, just like the previous proof, the Nimbers are swapped as the first player cannot reduce the unmarked game to a Nimber of 0, resulting in the unmarked game having a Nimber of 0 for all $n$.

    \end{proof}
    Player 1 is always losing when playing on a cycle with sources allowed, but the winning strategy is not as immediately obvious as winning strategies for cases of the game when sources are disallowed.

    \section*{Acknowledgements}
    We would like to thank Professor Amit Sahai for introducing us to ``The Game of Cycles'' and giving us direction on what to research, specifically the idea of changing the rules of the game, and for giving advice on how to write a research paper. We further thank Professor Sahai for helping us learn Combinatorial Game Theory and how to use Latex. We also thank Owen Maitzen and Gaurav Sen for creating amazing YouTube videos explaining Sprague–Grundy Theory~\cite{hackenbush,grundy}.

    \bibliographystyle{plain}
    \bibliography{biblo.bib}

\begin{thebibliography}{1}

\bibitem{alvarado2021game}
Ryan Alvarado, Maia Averett, Benjamin Gaines, Christopher Jackson, Mary~Leah Karker, Malgorzata~Aneta Marciniak, Francis Su, and Shanise Walker.
\newblock The game of cycles.
\newblock {\em The American Mathematical Monthly}, 128(10):868--887, 2021.

\bibitem{fokkink2022some}
Robbert Fokkink and Jonathan Zandee.
\newblock Some remarks on the game of cycles.
\newblock {\em arXiv preprint arXiv:2209.14771}, 2022.

\bibitem{lin2021exploring}
Kailee Lin.
\newblock Exploring winning strategies for the game of cycles.
\newblock 2021.

\bibitem{hackenbush}
Owen Maitzen.
\newblock Hackenbush: a window to a new world of math.
\newblock https://www.youtube.com/watch?v=ZYj4NkeGPdM, 2021.

\bibitem{mathews2021game}
Bryant~G Mathews.
\newblock The game of arrows on 3-legged spider graphs.
\newblock {\em arXiv preprint arXiv:2110.08738}, 2021.

\bibitem{grundy}
Gaurav Sen.
\newblock Grundy numbers - combinatorial game theory - i.
\newblock https://www.youtube.com/watch?v=MboYbpE76js, 2017.

\bibitem{su2020mathematics}
Francis Su.
\newblock {\em Mathematics for human flourishing}.
\newblock Yale University Press, 2020.

\end{thebibliography}

\footnotesize
    \definecolor{codegreen}{rgb}{0,0.6,0}
    \definecolor{codegray}{rgb}{0.5,0.5,0.5}
    \definecolor{codepurple}{rgb}{0.58,0,0.82}
    \definecolor{backcolour}{rgb}{0.95,0.95,0.92}
    
    \lstdefinestyle{mystyle}{
        backgroundcolor=\color{backcolour},   
        commentstyle=\color{codegreen},
        keywordstyle=\color{magenta},
        numberstyle=\tiny\color{codegray},
        stringstyle=\color{codepurple},
        basicstyle=\ttfamily\footnotesize,
        breakatwhitespace=false,         
        breaklines=true,                 
        captionpos=b,                    
        keepspaces=true,                 
        numbers=left,                    
        numbersep=5pt,                  
        showspaces=false,                
        showstringspaces=false,
        showtabs=false,                  
        tabsize=2
    }
    
    \lstset{style=mystyle}
    
    \section{Code Appendix}
    \begin{lstlisting}[language=C++]
#include <string>
#include <vector>
#include <map>
#include <iostream>
#include <algorithm>
using namespace std;

/*
Cycles played on a line, yes sources but no sinks

To compute Nimbers this code will use a Dynamic Programming approach, using previously
collected subgames to eventually mex together and find the Nimber of the current game.
It is recomended to be familiar with Dynamic Programming to understand this program.
*/

//Function for calculating the minimum excluded value, or mex.
int mex(vector<int> v){
    if(v.size()==0) return 0;
    sort(v.begin(),v.end());
    if(v[0]!=0) return 0;
    for(int i=1;i<v.size();i++){
        if(v[i-1]+1!=v[i] && v[i-1]!=v[i]) return v[i-1]+1;
    }
    return v[v.size()-1]+1;
}

int main(){

    cout << "Please enter the largest number of unmarked edges you wish to observe the Nimbers for:" << endl;
    //'n' being the amount of unmarked segments.
    int n;
    cin >> n;
    /*
    LineMap is a vector of sets used for storing each unqiue subgame for Dynamic
    Programing, where the amount of unmarked edges, n, dictates in which map the
    subgames will be stored.
    */
    vector<map<string, int>> LineMap(n);
    /*
    The base case for n=1 is stored in the first map, LineMap[0], each subgame
    will be stored with its Nimber.

    Notice that the mirror of each subgame is present, this is not shown in the
    proof as they are equivalent, however, for simplicity's sake the code will 
    analyze every case regardless of equivalency through mirroring.
    */
    LineMap[0]["-"] = 0;
    LineMap[0][">-"] = 0;
    LineMap[0]["<-"] = 1;
    LineMap[0]["-<"] = 0;
    LineMap[0]["->"] = 1;
    LineMap[0][">-<"] = 0;
    LineMap[0]["<->"] = 1;
    LineMap[0][">->"] = 1;
    LineMap[0]["<-<"] = 1;

    /*
    This for loop will loop through all subgames of length [2,n], where "lineMapIndex"
    is the index variable dictating where in the LineMap the subgames will be stored,
    the length of the current game being computed is lineMapIndex+1, in the commentary
    this value will be referred to as curN.
    */
    for(int lineMapIndex=1;lineMapIndex<n;lineMapIndex++){
        /*
        "line" represents the game of length curN, it starts off with all of its
        segments as unmarked which is notated as '-'.
        */
        string line = "";
        for(int i=0;i<=lineMapIndex;i++){
            line = line + "-";
        }
        /*
        The vector "cases" will store the Nimbers of each subgame's subgame pairs
        inorder to use the mex function to calculate the Nimber of the subgame, each
        subgame is represented with a number as shown below:
        
        Note that case 0 is the orginial unmarked game of length curN.
        */
        vector<vector<int>> cases(9);
        /*
        The Cases represent the different subgame types depending on how the edges of
        the two ends are marked. REMEBER THAT UNMARKED EDGES AT THE END MUST OBEY THE
        "NO SINK" RULE, however, the already marked edges do not need to obey the
        rule because we assume that they are "attatched" to other games and thus their
        marking did previously obey the "no sink" rule.
        Case 0: -...-
        Case 1: >-...-
        Case 2: <-...-
        Case 3: -...->
        Case 4: -...-<
        Case 5: >-...->
        Case 6: <-...-<
        Case 7: <-...->
        Case 8: >-...-<
        

        
        These are edge cases for when the first edge is marked, creating subgames of
        a=0 and b=curN-1. The according Nimber is found through Dynamic Programing
        as the values of any subgames of length less than curN have already been
        computed.

        Certain cases are absent because those cases are not possible due to the
        restriction that moves are not allowed to create sinks.
        */
        line[0] = '<';
        /*
        Observe that cases 0,1,3,4,5,8 are absent due to the creation of a leftmost
        sink.
        */
        cases[2].push_back(LineMap[lineMapIndex-1][line]);
        cases[6].push_back(LineMap[lineMapIndex-1][line+"<"]);
        cases[7].push_back(LineMap[lineMapIndex-1][line+">"]);
        line[0] = '>';
        cases[0].push_back(LineMap[lineMapIndex-1][line]);
        cases[1].push_back(LineMap[lineMapIndex-1][line]);
        cases[2].push_back(LineMap[lineMapIndex-1][line]);
        cases[3].push_back(LineMap[lineMapIndex-1][line+">"]);
        cases[4].push_back(LineMap[lineMapIndex-1][line+"<"]);
        cases[5].push_back(LineMap[lineMapIndex-1][line+">"]);
        cases[6].push_back(LineMap[lineMapIndex-1][line+"<"]);
        cases[7].push_back(LineMap[lineMapIndex-1][line+">"]);
        cases[8].push_back(LineMap[lineMapIndex-1][line+"<"]);
        line[0] = '-';
        /*
        This for loop will loop through all pairs of subgames except when a=0
        (the above cases) or b=0.
        */
        for(int i=1;i<lineMapIndex;i++){
            /*
            The varibles "nl...rr" represent the Nimber for each subgame's two
            subgames, when marked at position i, resulting in a LEFT subgame and a
            RIGHT subgame. The variable name consists of two letters, a first letter
            and a second letter. The first letter denotes the marking of the edge at
            the end, not posititon i. 'n' stands for UNMARKED, 'l' stands for <, 'r'
            stands for >. The second letter of the variable name denotes if it is the
            LEFT or RIGHT subgame.
            
            For example,
            "nl" represents the subgame's LEFT subgame where the leftmost edge is
            UNMARKED.
            Of form: --...-(< or > at pos i)
            "rl" represents the subgame's LEFT subgame where the leftmost edge is
            marked to the RIGHT
            Of form: >-...-(< or > at pos i)
            "lr" represents the subgame's RIGHT subgame where the rightmost edge is
            marked to the LEFT
            Of form: (< or > at pos i)-...-<

            These are combined to create each case, for example case 5 is
            made up of "rl" and "rr":
            Of form: >-...-(< or >)-...->
            */
            int nl,rl,ll,nr,lr,rr = 0;
            //This is the direction of the marked edge at position i
            line[i] = '<';
            string leftLine = line.substr(0,i+1);
            string rightLine = line.substr(i, line.size()-i);
            int leftIndex = i;
            int rightIndex = line.size()-1-i;
            
            nl = LineMap[leftIndex-1][leftLine];
            rl = LineMap[leftIndex-1][">" + leftLine];
            ll = LineMap[leftIndex-1]["<" + leftLine];

            nr = LineMap[rightIndex-1][rightLine];
            lr = LineMap[rightIndex-1][rightLine + "<"];
            rr = LineMap[rightIndex-1][rightLine + ">"];

            /*
            Each case's Nimber is calcuated by using xor to combine the subgames'
            Nimbers.
            */
            cases[0].push_back(nl^nr);
            cases[1].push_back(rl^nr);
            cases[2].push_back(ll^nr);
            cases[3].push_back(nl^rr);
            cases[4].push_back(nl^lr);
            cases[5].push_back(rl^rr);
            cases[6].push_back(ll^lr);
            cases[7].push_back(ll^rr);
            cases[8].push_back(rl^lr);

            //This is the other direction for the edge marked at position i.
            leftLine[leftLine.size()-1] = '>';
            rightLine[0] = '>';

            nl = LineMap[leftIndex-1][leftLine];
            rl = LineMap[leftIndex-1][">" + leftLine];
            ll = LineMap[leftIndex-1]["<" + leftLine];
            
            nr = LineMap[rightIndex-1][rightLine];
            lr = LineMap[rightIndex-1][rightLine + "<"];
            rr = LineMap[rightIndex-1][rightLine + ">"];

            /*
            Each case's Nimber is calcuated by using xor to combine the subgames'
            Nimbers.
            */
            cases[0].push_back(nl^nr);
            cases[1].push_back(rl^nr);
            cases[2].push_back(ll^nr);
            cases[3].push_back(nl^rr);
            cases[4].push_back(nl^lr);
            cases[5].push_back(rl^rr);
            cases[6].push_back(ll^lr);
            cases[7].push_back(ll^rr);
            cases[8].push_back(rl^lr);
            //Unmarking the edge at position i for the next iteration.
            line[i] = '-';
        }
        /*
        These are edge cases for when the last segment is marked creating subgames of
        a=curN-1 and b=0, the same logic for when a=0 appiles here.
        */
        line[lineMapIndex] = '<';
        cases[0].push_back(LineMap[lineMapIndex-1][line]);
        cases[1].push_back(LineMap[lineMapIndex-1][">"+line]);
        cases[2].push_back(LineMap[lineMapIndex-1]["<"+line]);
        cases[3].push_back(LineMap[lineMapIndex-1][line]);
        cases[4].push_back(LineMap[lineMapIndex-1][line]);
        cases[5].push_back(LineMap[lineMapIndex-1][">"+line]);
        cases[6].push_back(LineMap[lineMapIndex-1]["<"+line]);
        cases[7].push_back(LineMap[lineMapIndex-1]["<"+line]);
        cases[8].push_back(LineMap[lineMapIndex-1][">"+line]);
        line[lineMapIndex] = '>';
        /*
        Observe that cases 0,1,2,4,6,8 are absent due to the creation of a rightmost
        sink
        */
        cases[3].push_back(LineMap[lineMapIndex-1][line]);
        cases[5].push_back(LineMap[lineMapIndex-1][">"+line]);
        cases[7].push_back(LineMap[lineMapIndex-1]["<"+line]);

        line[lineMapIndex] = '-';
        /*
        Now we calculate the mex of all the Nimbers of the subgames to find the
        Nimber of each case
        */
        LineMap[lineMapIndex][line] = mex(cases[0]);
        LineMap[lineMapIndex][">" +line] = mex(cases[1]);
        LineMap[lineMapIndex]["<" +line] = mex(cases[2]);
        LineMap[lineMapIndex][line+ ">"] = mex(cases[3]);
        LineMap[lineMapIndex][line+ "<"] = mex(cases[4]);
        LineMap[lineMapIndex][">" +line+ ">"] = mex(cases[5]);
        LineMap[lineMapIndex]["<" +line+ "<"] = mex(cases[6]);
        LineMap[lineMapIndex]["<" +line+ ">"] = mex(cases[7]);
        LineMap[lineMapIndex][">" +line+ "<"] = mex(cases[8]);

    }
    cout << endl << endl;
    cout << "All \',\'s are 0s, and all \'.\'s are 1s" << endl;
    cout << "Each column i represents the Nimber for the subgames of length i, each game type is shown to the \nright of each row. The type is determined by the marking of the end segments which are shown:" << endl;
    //Printing the results in one massive line to see patterns.
    cout << endl;
    int aa=0;
    for(auto z : LineMap[0]){
        for(int b=0;b<n;b++){
            int c = 0;
            for(auto i : LineMap[b]){
                if(c==aa){
                    /*
                    In order to more easily see the pattern, 1s and 0s are turned
                    into ','s and '.'s respectively.
                    */
                    if(i.second==0){
                        cout << ",";
                    }
                    else if(i.second==1){
                        cout << ".";
                    }
                    else cout << i.second;
                }
                c++;
            }
        }
        cout << "  :"<<z.first;
        cout << endl;
        aa++;
    }

    //Printing the results of each case to analyze separately if desired
    cout << endl;
    for(int a=0;a<n;a++){
        cout << "Length: " << a+1 << endl;
        for(auto i : LineMap[a]){
            cout << i.first << " = " << i.second << endl;
        }
    }

}
\end{lstlisting}
    
\end{document}